\documentclass[a4paper,10pt]{article}
\usepackage{amssymb}
\usepackage{amsmath,amsfonts,amsthm,amssymb}
\usepackage[dvips]{graphics}
\usepackage{epsfig}
\usepackage{indentfirst}
\usepackage{cancel,soul}
\usepackage{ulem}

\usepackage{color}
\usepackage{bm}
\allowdisplaybreaks

\usepackage[colorlinks=true,citecolor=red,linkcolor=blue,%
urlcolor=RubineRed,pdfpagetransition=Blinds,pdftoolbar=false,pdfmenubar=false]{hyperref}

\newtheoremstyle{theorem}
  {10pt}          
  {10pt}  
  {\sl}  
 {}
  {\bf}  
  {. }    
  { }    
  {}     
\theoremstyle{theorem}

\newtheorem{theorem}{Theorem}[section]

\newtheorem{definition}{Definition}[section]
 \newtheorem{lemma}{Lemma}[section]
 
 \newtheorem{remark}{Remark}[section]
  
\numberwithin{equation}{section}

\newtheoremstyle{defi}
  {10pt}          
  {10pt}  
  {\rm}  
  {}  
  {\bf}  
  {. }    
  { }    
  {}     
\theoremstyle{defi}

\begin{document}
\baselineskip = 16pt

\title{\bf Inviscid Incompressible Limit for Degenerate Compressible Navier-Stokes Equations on Expanding Domains}

\author{
Tong Tang\footnote{School of Mathematical Science, Yangzhou University, Yangzhou, P.~R.~China. E-mail: tt0507010156@126.com}\\
Emil Wiedemann \footnote{Department of Mathematics, Friedrich-Alexander-Universit\"at Erlangen-N\"{u}rnberg, Erlangen, Germany. \mbox{E-mail}: emil.wiedemann@fau.de}\\
Lu Zhu \footnote{School of Mathematics, Hohai University, Nanjing, P.~R.~China. E-mail: zhulu@hhu.edu.cn}\\
\date{}}

\maketitle

\begin{abstract}
We simultaneously investigate the inviscid and Low Mach number limits on expanding domains for the $3D$ degenerate compressible Navier-Stokes equations, whose viscosity depends on density. Starting from ill-prepared data, we show the limit system is the incompressible Euler system. 
Our result extends a previous result of Feireisl et al.\ concerning the constant viscosity Navier-Stokes equations, and is the first to establish the inviscid and incompressible limits at the same time for density-dependent viscosity.  

\vspace{0.5cm}

{{\bf Key words:} degenerate compressible Navier-Stokes equations, singular limits, low Mach number, expanding domains, relative energy}

\medskip

{ {\bf 2010 Mathematics Subject Classifications}: 35Q30, 35Q31, 35Q86, 76N10.}
\end{abstract}

\maketitle
\section{Introduction}\setcounter{equation}{0}
Mathematical models of fluid flow often contain parameters that describe to what extent the fluid is, say, compressible, viscous, Newtonian, homogeneous, etc. One wishes that there be, on a metalevel, a certain kind of continuity with respect to such parameters: This would allow, for instance, to effectively describe a fluid with very small viscosity using a perfectly inviscid model, or a very slightly compressible fluid by means of an incompressible model. This is, generally speaking, the problem of \emph{singular limits}, where the attribute `singular' refers to situations where crucial quantities or estimates blow up as the parameters converge. This is most notably the case for the inviscid limit, where any information on the velocity gradient is lost as viscosity tends to zero.

Here, we tackle three singular limits at once: We let viscosity and Mach number tend to zero and, at the same time, the space domain will exhaust the whole space $\mathbb R^3$.
More precisely, we consider the degenerate compressible Navier-Stokes equations with drag on a spatial domain $\Omega_M$:

\begin{equation}\label{1.1}
\left\{
\begin{array}{llll}
\partial_t\rho+\text{div}_{x}(\rho\mathbf{u})=0,\\
\partial_{t}(\rho\mathbf u)+\text{div}_{x}(\rho\mathbf{u}\otimes\mathbf{u})+\frac{1}{\varepsilon^2}\nabla_{x}p(\rho)
=\varepsilon^\alpha\text{div}_{x}(\rho D_x(\mathbf{u}))-r_1(\varepsilon)\rho|\mathbf u|\mathbf u,
\end{array}\right.
\end{equation}
where $\mathbf u(t,x)\in\mathbb{R}^3$ and $\rho(x,t)$ 
represent the velocity and density, respectively. Moreover, the pressure is given constitutively as a function of density by $p(\rho)=\rho^\gamma$ for some $\gamma>1$, $\alpha>0$ is given,  and $r_1(\varepsilon)$ is the coefficient of friction of which we assume $r_1(\varepsilon)>0$ for each $\varepsilon>0$ and $r_1(\varepsilon)\rightarrow0$ as $\varepsilon\rightarrow0$. 

The family of domains $\{\Omega_M\}$ has the following properties:
\begin{enumerate}
\item[(H1):] $\Omega_M\subset\mathbb{R}^3$ {are simply connected, bounded} $C^2$ domains,
\item[(H2):] {There exists} $\omega>0$ {such that} $\{x\in\mathbb{R}^3: |x|<\omega M\}\subset\Omega_M$,
\item[(H3):] There exists $\beta>0$ such that $|\partial\Omega_M|_2\leq\beta M^2$,
\item[(H4):] $\varepsilon M(\varepsilon)\rightarrow\infty$ {as} $\varepsilon\rightarrow0$,
\end{enumerate}
where $|\cdot|_2$ denotes the two-dimensional Hausdorff measure. The boundary condition is
\begin{align}\label{1.2}
\rho\mathbf u|_{\partial\Omega_M}=0,\hspace{5pt}\nabla_x\rho\times n|_{\partial\Omega_M}=0
\end{align}
and we consider so-called ill-prepared initial data
\begin{align}\label{1.4}
&\rho|_{t=0}=\rho_{0,\varepsilon}=1+\varepsilon\rho^{(1)}_{0,\varepsilon},\hspace{5pt}\mathbf u|_{t=0}=\mathbf u_{0,\varepsilon},\\
&\rho^{(1)}_{0,\varepsilon}\rightarrow\rho^{(1)}_{0}\hspace{3pt}\text{in}\hspace{3pt}L^2(\mathbb{R}^3),
\hspace{5pt}\mathbf u_{0,\varepsilon}\rightarrow\mathbf u_0\hspace{3pt}\text{in}\hspace{3pt}L^2(\mathbb{R}^3),
\end{align}
where $\mathbf u_0$ is not necessarily divergence-free.

Regarding the important question of well-posedness of compressible Navier-Stokes equations, smooth or classical solutions for the compressible Navier-Stokes equations are known to exist locally in time. For initial data representing small perturbations of equilibrium in a suitable sense, one can obtain global existence of strong solutions. 

Turning to weak solutions, P.-L.~Lions \cite{lion} and Feireisl \cite{e1} showed global existence for the constant viscosity compressible Navier-Stokes equations for adiabatic exponents greater than $3/2$. In contrast to the constant viscosity case, the equations become more complicated when the viscosity depends on density, because there is no sufficient information on $\nabla_x\mathbf u$ due to the degeneracy of the momentum equation, a problem that occurs near or at vacuum. Vasseur \& Yu \cite{v2} and Li \& Xin \cite{li2} independently proved the global existence of weak solutions for the compressible degenerate Navier-Stokes equations in $\mathbb{T}^3$ by virtue of the Bresch-Desjardins entropy introduced in~\cite{BDentropy}, among other techniques.
Previously, Bresch et al.~\cite{b1} had already established existence results on a bounded domain under additional restrictions on the initial density.

The literature on low Mach number and vanishing viscosity for compressible flows is vast.
However, most results concern constant viscosity. To our knowledge, there are only a few results on the compressible degenerate Navier-Stokes system. Fanelli and Zatorska \cite{f} and, more recently, Chaudhuri et al.~\cite{n} studied the anelastic incompressible limit for compressible degenerate Navier-Stokes equations with fixed positive viscosity coefficients. Bisconti and Caggio \cite{bi}, on the other hand, proved the vanishing viscosity limit for compressible degenerate Navier-Stokes equations at fixed Mach number. Both \cite{bi,n} utilized the revised relative entropy inequality for degenerate Navier-Stokes equations based on~\cite{b2}. 

The relative entropy method, generally speaking, was introduced by Dafermos \cite{D} for hyperbolic systems. Germain~\cite{Germain} and then Feireisl et al.~\cite{e3} adopted the idea and proved weak-strong uniqueness for the constant viscosity compressible Navier-Stokes equations. The relative entropy/energy method is a versatile and robust tool which is often applied to establish singular limits of weak to strong solutions. For details and applications, see for instance~\cite{e2,e4,e5,w}.  Bisconti et al.~\cite{bi2} used the same relative energy functional as in the constant viscosity case \cite{e3} to prove the vanishing viscosity limit for compressible degenerate Navier-Stokes equations.

In this paper, we consider the limit for~\eqref{1.1} as $\varepsilon\rightarrow0$, $M\rightarrow\infty$, corresponding to the triple limit of vanishing viscosity, low Mach number, and expanding domains. The latter limit has been studied, for other equations, in~\cite{k, e6} as a way to rigorously show the independence of local interior behaviour of the flow from distant boundaries, which otherwise would considerably influence the dynamics. We thus obtain as our main result (Theorem~\ref{thm2.1}) the convergences
$\rho\rightarrow1$, $\mathbf u\rightarrow\mathbf v$, where $\mathbf v$ is a strong solution of the incompressible Euler system
\begin{align}\label{1.8}
\partial_t\mathbf v+\mathbf v\cdot\nabla_x\mathbf v+\nabla_x\Pi=0,\nonumber\\
\text{div}_x\mathbf v=0, \hspace{3pt}\text{in}\hspace{3pt}\mathbb{R}^3,
\end{align}
as long as the latter exists. As is typical for the relative energy method, our result is thus of type `weak-to-strong convergence'.

It is well known that the obstacle to the asymptotic limit $\varepsilon\rightarrow0$, as far as the Mach number is concerned, lies in the oscillations due to the acoustic velocity component, and, as far as the viscosity is concerned, in the possible formation of boundary layers. To avoid problems related to the latter, we consider the viscous equations in a large domain and the target system in the whole space. This is the above-mentioned expanding domain scenario. It comes with the technical difficulty that the solution $\mathbf v$ for the target incompressible Euler system is not an admissible test function in the relative entropy inequality. Motivated by \cite{e6}, we introduce a corrector in the relative energy inequality to control the convergence of terms involving $\epsilon, M$. Compared with \cite{e6}, our principal obstacle is the convergence of the viscous term, whereas \cite{e6} considered the constant viscosity case. To this end, we follow \cite{bi2}, who gave a pure relative energy argument which does not rely on the Bresch-Desjardins structure. In this sense, our proof is conceptually simple, {in comparison with~\cite{n}}. Moreover, we treat some parts of the remainder $\mathcal{R}$ in an alternative and arguably simpler fashion. 

It should not come as a surprise that degeneracy of the viscosity, while posing substantial challenges to the existence theory, does not impede the applicability of relative energy arguments. In fact, the latter works particularly well for the Euler equations themselves, which somehow display the maximally degenerate viscosity (viz.~zero).

Our assumptions on~\eqref{1.1} with shear viscosity $\rho$ and bulk viscosity $0$ are chosen, for simplicity, as the prototypic choice satisfying the Bresch-Desjardins relation; this choice and the inclusion of the drag term play, for our purposes, the mere role of ensuring the existence of weak solutions of~\eqref{1.1} for any fixed $\epsilon, M$ thanks to~\cite{b1}. One can easily imagine more general weak-strong convergence results under the conditional hypothesis that the approximate problems have a solution. Similarly, working on the whole space instead of expanding domains, one could remove the drag term, which is not required for existence of~\eqref{1.1} on the whole space.

In Section 2, we introduce the definition of weak solution and some necessary useful lemmas, then we state the main theorem. The key ingredient of the proof is the relative entropy inequality stated in Section 3. Section 4 is devoted to the proof of the main theorem.

\section{Preliminaries} \label{S2}

\subsection{Definition of weak solutions }


\begin{definition}\label{def1}
We say that $(\rho,\mathbf u)$ is a weak solution to~\eqref{1.1} {with initial data $(\rho_0,\mathbf u_0)$  if the following conditions are satisfied:
\begin{align}\label{2.1}
&\rho\geq0,\hspace{3pt} \rho|\mathbf u|^2\in L^\infty(0,T; L^1(\Omega_M)),
\hspace{3pt}\rho\in L^\infty(0,T;L^\gamma(\Omega_M)),
\nabla_x\sqrt{\rho}\in L^\infty(0,T;L^2(\Omega_M));\\
&\rho\mathbf u|_{\partial\Omega_M}=0\hspace{3pt}\text{in}\hspace{3pt}L^2((0,T);L^1(\partial\Omega_M)),\hspace{5pt}\nabla_x\rho\times n|_{\partial\Omega_M}=0,\text{in}\hspace{3pt}L^2((0,T);L^\infty(\partial\Omega_M));\nonumber
\end{align} }

\noindent
$\bullet$ the continuity equation
\begin{align}\label{2.2}
\int_{\Omega_M}\rho_0\varphi(0) dx+\int^T_0\int_{\Omega_M}\rho\partial_t\varphi+\rho\mathbf{u}\cdot\nabla_x\varphi dxdt=0
\end{align}
holds for all $\varphi\in C^\infty_c([0,T)\times\Omega)$;

\noindent
$\bullet$
the momentum equation
\begin{align}\label{2.3}
\int_{\Omega_M}\rho_0\mathbf u_0\varphi(0) dx&+\int^T_0\int_{\Omega_M}\rho\mathbf{u}\partial_t\varphi+ \sqrt{\rho}\mathbf{u}\otimes\sqrt{\rho}\mathbf{u}:\nabla_x\varphi
+\frac{1}{\varepsilon^2}p(\rho)\operatorname{div}_x\varphi dxdt\nonumber\\
&\hspace{8pt}-\varepsilon^\alpha\int^T_0\int_{\Omega_M}\sqrt{\rho}\mathbb{S}_\mu:\nabla_x\varphi dxdt
-r_1(\varepsilon)\int^T_0\int_{\Omega_M}\sqrt{\rho}|\mathbf u|\sqrt{\rho}\mathbf u\cdot\varphi dxdt=0
\end{align}
holds for all $\varphi\in C^\infty_c([0,T)\times\Omega)$; 


\noindent
$\bullet$
the energy inequality
\begin{align}\label{2.4}
\int_{\Omega_M}\frac{1}{2}\rho|\mathbf{u}|^2(\tau)+\frac{1}{\varepsilon^2}H(\rho(\tau))dx
+\varepsilon^\alpha\int^\tau_0\int_{\Omega_M}|\mathbb{S}_\mu|^2 dxdt
+r_1(\varepsilon)\int^\tau_0\int_{\Omega_M}\rho|\mathbf u|^3dxdt\nonumber\\
\leq\int_{\Omega_M}\frac{1}{2}\rho_0|\mathbf{u}_0|^2+\frac{1}{\varepsilon^2}H(\rho_0)dx
\end{align}
holds for a.e.\ $\tau\in(0,T)$,
where $H(\rho)=\rho\int^\rho_1\frac{p(z)}{z^2}dz$.
{
\begin{remark}
Here we adopt the idea from Lacroix-Violet and Vasseur \cite{la} and of Bresch et al.~\cite{b3} to rewrite the viscous term as
\begin{align*}
\text{div}_{\bf x}(\rho D_x(\mathbf{u}))=\text{div}_{\bf x}(\sqrt{\rho}\mathbb{S}_{\mu})
\end{align*}
where $\mathbb{S}_{\mu}$ is the symmetric part of $\mathbb{T}_\mu$ as
\begin{align*}
\sqrt{\rho}\mathbb{T}_{\mu}=\nabla_x(\rho\mathbf u)-\sqrt{\rho}\mathbf u\otimes\sqrt{\rho}\nabla_x\ln\rho,
\hspace{3pt}\text{in}\hspace{3pt}\mathcal{D}'.
\end{align*}

\end{remark}
}

\end{definition}
This definition is the one from~\cite{b1}. There, it is shown that the boundary conditions in~\eqref{2.1} actually make sense, that the energy inequality follows automatically (so we included it in the definition merely for convenience), and that weak solutions exist for any data that satisfy~\eqref{2.1} initially.

\subsection{Energy bounds}
Here, we use the decomposition
\begin{align*}
h =[h]_{ess} +[h]_{res},\hspace{10pt}[h]_{ess}=\chi(\rho )h,\hspace{5pt}[h]_{res}=(1-\chi(\rho))h,
\end{align*}
where
\begin{align*}
\chi \in C_c^\infty(0,\infty),\hspace{5pt}0\leq\chi(\rho)\leq1,\hspace{5pt}\chi(\rho)=1\hspace{3pt}\text{in a neighbourhood of}\hspace{3pt}\rho=1.
\end{align*}
Then we have, as in~\cite{e5},
\begin{align}
&\operatorname{esssup}_{t\in[0,T]}\|\sqrt{\rho}\mathbf u(t,\cdot)\|_{L^2(\Omega_M)}\leq C,\hspace{3pt}\|\mathbb{S}_\mu\|_{L^2((0,T),L^2(\Omega_M))}\leq \frac{C}{\varepsilon^{\frac{\alpha}{2}}},\label{2.5}\\
&\operatorname{esssup}_{t\in[0,T]}\big{(}\|1_{res}\|_{L^1(\Omega_M)}+\|[\rho^\gamma]_{res}\|_{L^1(\Omega_M)}\big{)}\leq \varepsilon^2C,\label{2.6}\\
&\operatorname{esssup}_{t\in[0,T]}\left\|\left[\frac{\rho-1}{\varepsilon}\right]_{ess}\right\|_{L^2(\Omega_M)}\leq C.\label{2.7}
\end{align}

\subsection{Solutions of the target system}
We suppose the initial velocity $\mathbf u_0$ in \eqref{1.4} is sufficiently smooth and confined to a compact set, then the inviscid system \eqref{1.8} is known to possess a smooth solution at least on a short time interval, see \cite{ka}. More precisely, if
\begin{align*}
&\mathbf u_0\in C_c^m(\mathbb{R}^3)\hspace{3pt} \text{for some}\hspace{3pt}m>4, \\
&\mathbf v(0,\cdot)=\mathbf v_0=H[\mathbf u_0],
\end{align*}
then the system \eqref{1.8} possesses a solution
\begin{align}
\mathbf v\in C^k([0,T_{\max});W^{m-k,2}(\mathbb{R}^3)),\quad k=1,2...m-1,
\end{align}
where $T_{\max}>0$. Here, for a vector field $\mathbf u\in L^2$, we denote by $H[\mathbf u]$ the standard Helmholtz projection onto the space of solenoidal functions.

\subsection{Acoustic waves}
Rewriting the Navier-Stokes equations as

\begin{equation}\label{2.9}
\left\{
\begin{array}{llll}
&\varepsilon\partial_t\frac{\rho-1}{\varepsilon}+\text{div}_{x}(\rho\mathbf{u})=0,\\
&\varepsilon\partial_{t}(\rho\mathbf u)+p'(1)\nabla_{x}\left(\frac{\rho-1}{\varepsilon}\right)
=\varepsilon\big{(}\varepsilon^\alpha\text{div}_{x}(\sqrt{\rho}\mathbb{S}_{\mu})
-\text{div}_{x}(\rho\mathbf{u}\otimes\mathbf{u})\\
&\hspace{3cm}-\frac{1}{\varepsilon^2}\nabla_x(p(\rho)-p'(1)(\rho-1)-p(1))
-r_1(\varepsilon)|\mathbf u|\mathbf u\big{)},
\end{array}\right.
\end{equation}
motivates the corresponding acoustic system:
\begin{equation}\label{2.10}
\left\{
\begin{array}{llll}
\varepsilon\partial_ts+\Delta_x\Phi=0\\
\varepsilon\partial_{t}\nabla_x\Phi+p'(1)\nabla_xs=0,
\end{array}\right.
\end{equation}
with initial data:
\begin{align*}
s(t=0,\cdot)=\rho^{(1)}_0,\hspace{5pt}\nabla_x\Phi(t=0,\cdot)=\nabla_x\Phi_0=\mathbf u_0-H[\mathbf u_0].
\end{align*}
From \cite{s}, we have the conservation laws
\begin{align}\label{2.11}
\left(\|\nabla_x\Phi(t,\cdot)\|_{W^{k,2}(\mathbb{R}^3)}+\|s(t,\cdot)\|_{W^{k,2}(\mathbb{R}^3)}\right)|^{t=\tau}_{t=0}=0,
\hspace{3pt}k=0,1,\ldots
\end{align}
for any $\tau>0$, as well as the
{Strichartz estimates
\begin{align}\label{2.12}
\|\nabla_x\Phi(\tau,\cdot)\|_{W^{k,p}(\mathbb{R}^3)}+\|s(\tau,\cdot)\|_{W^{k,p}(\mathbb{R}^3)}\leq C\left(1+\frac{\tau}{\varepsilon}\right)^{-(\frac{1}{q}-\frac{1}{p})}
\big{(}\|\nabla_x\Phi_0\|_{W^{k+d,q}(\mathbb{R}^3)}+\|\rho^{(1)}_0\|_{W^{k+d,q}(\mathbb{R}^3)}\big{)},
\end{align}
where $2\leq p\leq\infty$, $\frac{1}{p}+\frac{1}{q}=1$, $k=0,1,\ldots$, and $d>3\left(\frac{1}{q}-\frac{1}{p}\right)$.
}


\begin{remark}\label{r2.1}
From to the compact support assumption on the initial data of \eqref{2.15} {and the hyperbolicity of~\eqref{2.10}}, we deduce that, for all $t\geq0$,
\begin{align*}
s(t,x)=0,\hspace{3pt}\Delta_x\Phi(t,x)=0\hspace{3pt}\text{for}\hspace{3pt}|x|\geq c+\sqrt{p'(1)}\frac{t}{\varepsilon}.
\end{align*}
Accordingly, {due to hypothesis (H4)}, the waves emanating from a compact subset $K$ of $\Omega_M$ can never reach the boundary within a fixed timespan $(0,T)$, for sufficiently small $\epsilon$. This means that the behaviour of solutions of the acoustic system on
$$K(t)=\left\{(t,x):\ 0\le t\le T\ \text{and }|x-x_0|\le \sqrt{p'(1)}\frac{t}{\varepsilon}\ \text{for some }x_0\in K\right\}$$
is the same as in the expanding domain $\Omega_M$ ({see Fig.1}). The expanding domain setup thus excludes the possibility of reflection of acoustic waves at the boundary.  
\end{remark}

\begin{figure}[h]
\centering
\includegraphics[scale=0.26]{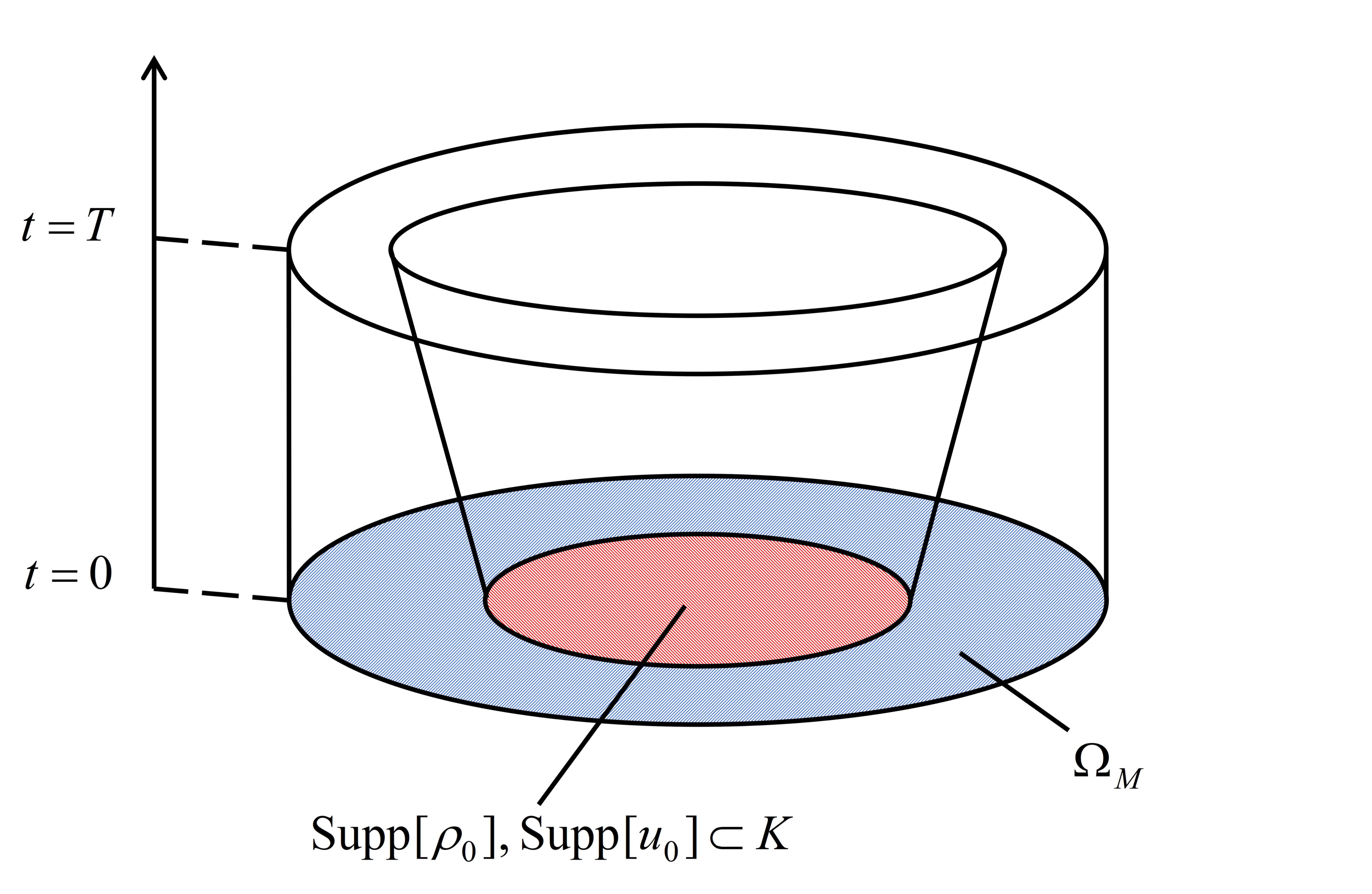}
\caption{$K(t)\subset\subset \Omega_M$}
\end{figure}
\vskip 0.5cm

\subsection{Cut-off operators}
From Remark \ref{r2.1}, we deduce for given $T$ and sufficiently small $\epsilon$ the identity
\begin{align*}
\nabla_x\Phi|_{\partial\Omega_M}=\nabla_x\Phi_0|_{\partial\Omega_M},\hspace{3pt}\text{for all}\hspace{3pt}t\in(0,T).
\end{align*}
Following \cite{e6}, we introduce the cut-off function $w_M=-\eta_M\mathbf v-\eta_M\nabla_x\Phi$,
where $\eta_M$ satisfies
\begin{align*}
\eta_M\in C^\infty_c(\mathbb{R}^3),\hspace{3pt}0\leq\eta_M\leq1,\hspace{3pt}
\eta_M|_{\partial\Omega_M}=1,\hspace{3pt}\eta_M(x)=0,\hspace{3pt}\operatorname{dist}(x,\partial\Omega_M)>1,
\end{align*}
and we have the estimate
\begin{align}\label{2.13}
\|\partial_tw_M(\tau,\cdot)\|_{L^{p}(\Omega_M)}+\|w_M(\tau,\cdot)\|_{W^{2,p}(\Omega_M)}
\leq CM^{2\left(\frac{1}{p}-1\right)}
\end{align}
for $1\leq p\leq\infty$, $\tau\in(0,T)$.

\subsection{Main result}
\begin{theorem}\label{thm2.1}
Let $\alpha>0$ and suppose the initial data \eqref{1.4} are chosen in such a way that
\begin{align}
&\mathbf v_0\in C^m(\mathbb{R}^3),\hspace{2pt}\rho^{(1)}_0\in C^m(\mathbb{R}^3),\hspace{2pt} m>4,\\
&\operatorname{supp}[\mathbf u_0],\hspace{2pt}\operatorname{supp}[\rho^{(1)}_0]\hspace{2pt}\text{compact in}\hspace{2pt}\mathbb{R}^3.\label{2.15}
\end{align}
Let $T_{\max}>0$ be the life-span of the smooth solution $\mathbf v$ of the Euler system \eqref{1.8}. Let $0<T<T_{\max}$, then for any weak solutions $(\rho,\mathbf u)$ of the Navier-Stokes equations \eqref{1.1}, under the stated hypotheses (H1)--(H4) we have
\begin{align*}
&\sqrt{\rho}\mathbf u\rightarrow\mathbf v\hspace{5pt}\text{strongly in}\hspace{3pt}L^{2}(0,T;L^2(K)),\\
&\rho\rightarrow1\hspace{5pt}\text{strongly in}\hspace{3pt}L^\infty(0,T;L^{\min\{2,\gamma\}}(K)),
\end{align*}
for any compact $K\subset\mathbb{R}^3$.
\end{theorem}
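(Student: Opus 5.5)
We argue by the relative energy method, in the expanding-domain setting of \cite{e6} combined with the treatment of the degenerate viscous term from \cite{bi2}. The key tool is the relative energy
\begin{align*}
\mathcal{E}(\rho,\mathbf u\,|\,r,\mathbf U)(\tau)=\int_{\Omega_M}\frac12\rho|\mathbf u-\mathbf U|^2+\frac{1}{\varepsilon^2}\big(H(\rho)-H'(r)(\rho-r)-H(r)\big)\,dx,
\end{align*}
tested with $r=1+\varepsilon s$ and $\mathbf U=\mathbf v+\nabla_x\Phi+w_M$. Here $(s,\Phi)$ solves the acoustic system \eqref{2.10} and $w_M$ is the cut-off corrector. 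Because of the corrector, $\mathbf U$ vanishes on $\partial\Omega_M$. By Remark \ref{r2.1} and (H4), for $\varepsilon$ small the acoustic waves never reach the boundary layer where $\eta_M\neq0$, so $w_M$ only corrects the $M^{-2}$ decaying tails of $\mathbf v$ and $\nabla_x\Phi_0$. This is reflected in \eqref{2.13}.

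\textbf{Step 1: the relative energy inequality.} We derive it for weak solutions in the sense of Definition \ref{def1} by combining \eqref{2.4} with \eqref{2.2} tested by $\frac12|\mathbf U|^2$ and $H'(r)$, and with \eqref{2.3} tested by $\mathbf U$. A density argument is needed here, since $\mathbf U$ is smooth but not compactly supported in $\Omega_M$; the boundary condition $\rho\mathbf u|_{\partial\Omega_M}=0$ takes care of it. The result is $\mathcal{E}(\tau)+\varepsilon^\alpha\int\!\!\int|\mathbb S_\mu|^2+r_1\int\!\!\int\rho|\mathbf u|^3\le\mathcal{E}(0)+\int_0^\tau\mathcal{R}\,dt$, with a remainder $\mathcal R$ consisting of the usual convective, pressure and viscous terms.

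\textbf{Step 2: estimating the remainder.} We use the equations satisfied by $\mathbf v$, $s$ and $\Phi$ to cancel the leading terms, and the energy bounds \eqref{2.5}--\eqref{2.7} for what is left.
\begin{itemize}
\item \emph{Convective term.} It gives $\int\rho(\mathbf u-\mathbf U)\otimes(\mathbf U-\mathbf u):\nabla_x\mathbf U\le\|\nabla\mathbf U\|_\infty\,\mathcal E$.
\item \emph{Acoustic interactions.} These are the terms $\rho\nabla\Phi\cdot\nabla\Phi$ and $\mathbf v\cdot\nabla\Phi$. They are handled with the Strichartz decay \eqref{2.12}, taking $p=\infty$ so that $\int_0^T\|\nabla\Phi\|_{W^{1,\infty}}\to0$ as $\varepsilon\to0$.
\item \emph{Pressure and residual terms.} They are $O(\varepsilon)$ by \eqref{2.6}--\eqref{2.7}.
\item \emph{Corrector terms.} They are controlled by \eqref{2.13}: the $L^1$ norm is bounded by $C$, but it is multiplied by $\varepsilon$-small quantities or paired with the $L^\infty$ norm $CM^{-2}$. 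The volume of the support is $O(M^2)$ by (H3).
\item \emph{Drag term.} It is bounded by $r_1\int\rho|\mathbf u|^2|\mathbf U|\le Cr_1(\varepsilon)\to0$.
\end{itemize}

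\textbf{Step 3: the viscous term (main obstacle).} The term is $\varepsilon^\alpha\int\sqrt\rho\,\mathbb S_\mu:\nabla_x\mathbf U$. Following \cite{bi2}, we do not use the Bresch--Desjardins entropy. Instead we use Cauchy--Schwarz:
\begin{align*}
\varepsilon^\alpha\Big|\int\sqrt\rho\,\mathbb S_\mu:\nabla\mathbf U\Big|\le\frac{\varepsilon^\alpha}{2}\|\mathbb S_\mu\|_{L^2}^2+\frac{\varepsilon^\alpha}{2}\int\rho|\nabla\mathbf U|^2 .
\end{align*}
The first piece is absorbed by the dissipation on the left-hand side. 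The second is at most $C\varepsilon^\alpha\big(\|\nabla\mathbf U\|_{L^2}^2+\|[\rho]_{res}\|_{L^1}\|\nabla\mathbf U\|_\infty^2\big)\to0$ for every $\alpha>0$. This step needs care because $\nabla\mathbf U$ must be uniformly bounded in both $L^2$ and $L^\infty$, including the corrector contribution on the large domain $\Omega_M$.

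\textbf{Step 4: Gronwall and conclusion.} Since $\mathcal E(0)\to0$ by \eqref{1.4}, Gronwall's lemma gives $\sup_t\mathcal E(t)\to0$. On a compact set $K$ we have $\nabla\Phi\to0$ in $L^2(0,T;L^2(K))$ by Strichartz, and $w_M=0$ on $K$ for large $M$. Therefore $\sqrt\rho(\mathbf u-\mathbf v)\to0$, and together with $\sqrt\rho\to1$ this yields $\sqrt\rho\,\mathbf u\to\mathbf v$. Finally, the convergence $\rho\to1$ in $L^\infty(0,T;L^{\min\{2,\gamma\}}(K))$ follows from the coercivity of the pressure part of $\mathcal E$, split into its essential and residual parts.
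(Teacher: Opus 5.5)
Your proposal follows the paper's proof. It uses the same relative energy tested with $r=1+\varepsilon s$, $\mathbf U=\mathbf v+\nabla_x\Phi+w_M$, the same term-by-term control via \eqref{2.5}--\eqref{2.7}, Strichartz decay and \eqref{2.13}, the same Cauchy--Schwarz absorption of the viscous term into the dissipation as in \cite{bi2}, and Gr\"onwall. One point to make explicit when you write out the corrector terms: $\varepsilon^{-2}\int_{\Omega_M}(r-\rho)H''(r)\,\mathrm{div}_x w_M\,dx$ is not multiplied by anything $\varepsilon$-small but carries a factor $\varepsilon^{-1}$, and pairing $\varepsilon^{-1}\big[\frac{\rho-1}{\varepsilon}\big]_{ess}$ with $\|\mathrm{div}_x w_M\|_{L^2}\le CM^{-1}$ gives only $O\big((\varepsilon M)^{-1}\big)$, so (H4) is used quantitatively there as well, not just to keep the acoustic waves away from $\partial\Omega_M$.
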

\begin{remark}
The result remains valid with well-prepared initial data. We omit the details of the minor modifications.
\end{remark}

\begin{remark}
Adding to~\cite{f,n}, our result exhibits another case where the limit system does not depend on whether a constant or a variable density approximation is chosen. 


\end{remark}


\vskip 0.5cm

\section{Relative entropy inequality and main theorem}

Motivated by \cite{e2,e3}, for any finite weak solution $(\rho,\mathbf u)$ to the system \eqref{1.1}, we introduce the relative energy functional
\begin{align}\label{3.1}
\mathcal{E}(\rho,\mathbf{u}|r, \mathbf{U})&=\int_{\Omega_M}\left[\frac{1}{2}\rho|\mathbf u-\mathbf U|^2
+\frac{1}{\varepsilon^2}\big{(}H(\rho)-H'(r)(\rho-r)-H(r)\big{)}\right]dx,
\end{align}
where $r>0$, $\mathbf U|$ are smooth `test' functions compactly supported in $\Omega_M$. Later, we will make the choice $r=1+\varepsilon s$ and $\mathbf U=\mathbf v+\nabla_x\Phi+w_M$.

\begin{lemma}\label{relativeentropy}
Let $(\rho,\mathbf{u})$ be a weak solution introduced in Definition  \ref{def1}. Then $(\rho,\mathbf{u})$
satisfy the relative entropy inequality

\begin{align}
\mathcal{E}(\rho,\mathbf{u}|r,\mathbf U)|^{t=\tau}_{t=0}+\varepsilon^\alpha\int^\tau_0\int_{\Omega_M}|\mathbb{S}_\mu|^2~dxdt
\leq\int^\tau_0\mathcal{R}(\rho,\mathbf u,r,\mathbf U)dt,
\end{align}
where
\begin{align}\label{3.3}
\mathcal{R}(\rho,\mathbf u,r,\mathbf U)
=&\int_{\Omega_M}\sqrt{\rho}(\sqrt{\rho}\mathbf U-\sqrt{\rho}\mathbf u)\cdot(\partial_t\mathbf U+\mathbf u\cdot\nabla_x\mathbf U)~dx\nonumber\\
&+\frac{1}{\varepsilon^2}\int_{\Omega_M}\rho(\mathbf U-\mathbf u)\cdot\nabla_x(H'(r)-H''(1)(r-1)-H'(1))~dx\nonumber\\
&+\frac{1}{\varepsilon^2}\int_{\Omega_M}\big{(}p(r)-p'(r)(r-\rho)-p(\rho))\operatorname{div}_x\mathbf U~dx\nonumber\\
&+\frac{1}{\varepsilon^2}\int_{\Omega_M}(r-\rho)H''(r)\operatorname{div}_xw_M~dx
+\frac{1}{\varepsilon}\int_{\Omega_M}(r-\rho)H''(r)\operatorname{div}_x(s\mathbf U)~dx\nonumber\\
&-\int_{\Omega_M}\rho\partial_t\nabla_x\Phi\cdot(\mathbf U-\mathbf u)~dx+\varepsilon^\alpha\int^\tau_0\int_{\Omega_M}\sqrt{\rho}\mathbb{S}_\mu:\nabla_x\mathbf U~dxdt
\nonumber\\
&+r_1(\varepsilon)\int^\tau_0\int_{\Omega_M}|\sqrt{\rho}\mathbf u|\sqrt{\rho}\mathbf u\cdot\mathbf U~dxdt.
\end{align}
\end{lemma}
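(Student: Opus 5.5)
The plan is to derive the relative entropy inequality in the standard way from the weak formulation, as in \cite{e3,bi2}. We expand
\[
\mathcal{E}(\rho,\mathbf u|r,\mathbf U)=\int_{\Omega_M}\Big(\tfrac12\rho|\mathbf u|^2+\tfrac{1}{\varepsilon^2}H(\rho)\Big)dx-\int_{\Omega_M}\rho\mathbf u\cdot\mathbf U\,dx+\int_{\Omega_M}\tfrac12\rho|\mathbf U|^2dx-\tfrac{1}{\varepsilon^2}\int_{\Omega_M}\rho H'(r)\,dx+\tfrac{1}{\varepsilon^2}\int_{\Omega_M}p(r)\,dx,
\]
using $rH'(r)-H(r)=p(r)$. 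Each term is then handled separately.

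\begin{itemize}
\item The first term is controlled by the energy inequality \eqref{2.4}. This produces the dissipation $\varepsilon^\alpha\int\!\!\int|\mathbb S_\mu|^2$ and the drag term $r_1(\varepsilon)\int\!\!\int\rho|\mathbf u|^3$.
\item The term $\int\rho\mathbf u\cdot\mathbf U$ is computed from the momentum equation \eqref{2.3} with $\varphi=\mathbf U$. This gives $\partial_t\mathbf U$, the convective term, the pressure term $\varepsilon^{-2}\int p(\rho)\operatorname{div}_x\mathbf U$, the viscous term $\varepsilon^\alpha\int\sqrt\rho\,\mathbb S_\mu:\nabla_x\mathbf U$ and the drag term against $\mathbf U$.
\item The terms $\int\tfrac12\rho|\mathbf U|^2$ and $\varepsilon^{-2}\int\rho H'(r)$ are computed from the continuity equation \eqref{2.2} with $\varphi=\tfrac12|\mathbf U|^2$ and $\varphi=H'(r)$, respectively.
\item The last term is simply $\varepsilon^{-2}\int\partial_t p(r)$.
\end{itemize}

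Since $\mathbf U$ contains $w_M$ and is not compactly supported in $\Omega_M$, these test functions have to be justified first. By construction, $\mathbf v+\nabla_x\Phi+w_M=(1-\eta_M)(\mathbf v+\nabla_x\Phi)$ vanishes on $\partial\Omega_M$. The boundary condition $\rho\mathbf u|_{\partial\Omega_M}=0$ from \eqref{2.1} then lets us test with $\mathbf U$ by a standard density/cut-off approximation. The weak viscous term in the form $\sqrt\rho\,\mathbb S_\mu$ from the Remark after Definition \ref{def1} passes to the limit using \eqref{2.5}.

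Summing these identities gives the first line of \eqref{3.3}. The kinetic terms combine to $\int\rho(\mathbf U-\mathbf u)\cdot(\partial_t\mathbf U+\mathbf u\cdot\nabla_x\mathbf U)$. The dissipation terms are kept on the left, while the viscous and drag couplings with $\mathbf U$ move to the right.

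The delicate part is regrouping the $\varepsilon^{-2}$ pressure terms so that the remainder exhibits only quantities that are small for $r=1+\varepsilon s$. Using $\rho\nabla_xH'(r)=\tfrac{\rho}{r}\nabla_xp(r)$ and $\partial_tH'(r)=H''(r)\partial_tr$, the raw pressure contributions are
\[
\varepsilon^{-2}\Big[\int p(\rho)\operatorname{div}_x\mathbf U+\int(r-\rho)H''(r)\partial_tr-\int\rho\mathbf u\cdot\nabla_xH'(r)\Big].
\]
We add and subtract $\int\rho\mathbf U\cdot\nabla_xH'(r)$ and write $\nabla_xH'(r)=\nabla_x\big(H'(r)-H''(1)(r-1)-H'(1)\big)+H''(1)\nabla_xr$. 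The first piece gives the second line of \eqref{3.3}. The remaining piece $\varepsilon^{-2}\int\rho\mathbf U\cdot\nabla_x H''(1)\nabla_x r$ is integrated by parts and combined with the $p(\rho)\operatorname{div}_x\mathbf U$ term to produce $\varepsilon^{-2}\int\big(p(r)-p'(r)(r-\rho)-p(\rho)\big)\operatorname{div}_x\mathbf U$.

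The leftover linear terms in $(r-\rho)$ are treated with the acoustic system \eqref{2.10}. We substitute $\partial_t r=\partial_ts=-\varepsilon^{-1}\Delta_x\Phi$ and $\operatorname{div}_x\mathbf U=\Delta_x\Phi+\operatorname{div}_xw_M$ (since $\operatorname{div}_x\mathbf v=0$). The $\Delta_x\Phi$ contributions then cancel to leading order, leaving $\varepsilon^{-2}\int(r-\rho)H''(r)\operatorname{div}_xw_M$. The $O(\varepsilon)$ correction coming from $r\operatorname{div}_x\mathbf U$ versus $\operatorname{div}_x(r\mathbf U)$ gives $\varepsilon^{-1}\int(r-\rho)H''(r)\operatorname{div}_x(s\mathbf U)$.

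Similarly, the second acoustic equation, $\varepsilon\partial_t\nabla_x\Phi=-p'(1)\nabla_xs$, converts the term $\varepsilon^{-1}H''(1)\int\rho(\mathbf U-\mathbf u)\cdot\nabla_x s$ into $-\int\rho\,\partial_t\nabla_x\Phi\cdot(\mathbf U-\mathbf u)$, using $p'(1)=H''(1)$.

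I expect this bookkeeping to be the main obstacle, namely tracking exactly which $O(\varepsilon^{-2})$ and $O(\varepsilon^{-1})$ terms cancel by the acoustic system and which survive to give the displayed remainder. I would first carry it out formally for smooth solutions and then verify each identity at the level of the weak formulation.
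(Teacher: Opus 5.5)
Your proposal is the paper's proof. It uses the energy inequality, the weak formulations tested with $\mathbf U$, $\tfrac12|\mathbf U|^2$ and $H'(r)$, and then the regrouping \eqref{3.9}--\eqref{3.12} via $r\nabla_xH'(r)=\nabla_xp(r)$, $\operatorname{div}_x\mathbf v=0$ and the two acoustic equations. There are some bookkeeping slips to fix, though:
\begin{itemize}
\item The raw pressure term is $-\varepsilon^{-2}\int p(\rho)\operatorname{div}_x\mathbf U$, not $+$.
\item The piece you integrate by parts is $-\varepsilon^{-2}\int\rho\mathbf U\cdot\nabla_xH'(r)=\varepsilon^{-2}\int(r-\rho)\mathbf U\cdot\nabla_xH'(r)+\varepsilon^{-2}\int p(r)\operatorname{div}_x\mathbf U$.
\item $\partial_tr=\varepsilon\partial_ts=-\Delta_x\Phi$, not $-\varepsilon^{-1}\Delta_x\Phi$. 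With this, $\partial_tr+\operatorname{div}_x(r\mathbf U)=\operatorname{div}_xw_M+\varepsilon\operatorname{div}_x(s\mathbf U)$ holds exactly, not just to leading order.
\item The no-flux condition $\rho\mathbf u|_{\partial\Omega_M}=0$ is what you need when testing with $H'(r)$, which equals the constant $H'(1)$ near $\partial\Omega_M$. Testing with $\mathbf U$, which vanishes on $\partial\Omega_M$, only requires the energy bounds.
\end{itemize}
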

{\bf Proof:}
Using the relative energy~\eqref{3.1} and energy inequality \eqref{2.4}, we have
\begin{align}\label{3.4}
\mathcal{E}(\rho,\mathbf{u}|r, \mathbf{U})|^{t=\tau}_{t=0}
\leq&\int_{\Omega_M}(\frac{1}{2}\rho|\mathbf U|^2-\rho\mathbf u\cdot\mathbf U)|^{t=\tau}_{t=0}~dx
-\frac{1}{\varepsilon^2}\int_{\Omega_M}\big{(}H(r)+H'(r)(\rho-r)\big{)}|^{t=\tau}_{t=0}~dx\nonumber\\
&-\varepsilon^\alpha\int^\tau_0\int_{\Omega_M}\sqrt{\rho}|\mathbb{S}_\mu|^2~dxdt.
\end{align}

From the weak formulation \eqref{2.2}--\eqref{2.3}, we deduce
\begin{align}\label{3.7}
&\int_{\Omega_M}\frac{1}{2}\rho|\mathbf U|^2|^{t=\tau}_{t=0}~dx=\int^\tau_0\int_{\Omega_M}\rho\mathbf U\cdot\partial_t\mathbf U~dxdt+\int^\tau_0\int_{\Omega_M}\rho\mathbf u(\mathbf U\cdot\nabla_x\mathbf U)~dxdt,\\
&-\int_{\Omega_M}\rho\mathbf u\cdot\mathbf U|^{t=\tau}_{t=0}~dx=-\int^\tau_0\int_{\Omega_M}\left(\rho\mathbf u\cdot\partial_t\mathbf U+\rho\mathbf u\otimes\mathbf u:\nabla_x\mathbf U\right)~dxdt-\frac{1}{\varepsilon^2}\int^\tau_0\int_{\Omega_M}p(\rho)\text{div}_x\mathbf U~dxdt\nonumber\\
&\hspace{80pt}+\varepsilon^\alpha\int^\tau_0\int_{\Omega_M}\sqrt{\rho}\mathbb{S}_\mu:\nabla_x\mathbf U~dxdt
+r_1(\varepsilon)\int^\tau_0\int_{\Omega_M}|\sqrt{\rho}\mathbf u|\sqrt{\rho}\mathbf u\cdot\mathbf U~dxdt,\\
&-\frac{1}{\varepsilon^2}\int_{\Omega_M}\big{(}H(r)+H'(r)(\rho-r)\big{)}|^{t=\tau}_{t=0}~dx
=\frac{1}{\varepsilon^2}\int^\tau_0\int_{\Omega_M}[\partial_t(H'(r)(r-\rho)-\rho\mathbf u\cdot\nabla_x(H'(r))]~dxdt.
\end{align}

Summing \eqref{3.4}-\eqref{3.7} together, we obtain
\begin{align*}
\mathcal{E}(\rho,\mathbf{u}|r,\mathbf U)|^{t=\tau}_{t=0}\leq\int^\tau_0\int_{\Omega}\mathcal{R}(\rho,\mathbf u,r,\mathbf U)~dxdt
-\varepsilon^\alpha\int^\tau_0\int_{\Omega_M}\sqrt{\rho}|\mathbb{S}_\mu|^2~dxdt,
\end{align*}
where
\begin{align}\label{3.8}
\mathcal{R}(\rho,\mathbf u,r,\mathbf U)
=&\int_{\Omega_M}\rho(\mathbf U-\mathbf u)\cdot\partial_t\mathbf U~dx+\int_{\Omega_M}\sqrt{\rho}\mathbf u(\sqrt{\rho}\mathbf U-\sqrt{\rho}\mathbf u)\cdot\nabla_x\mathbf U~dx\nonumber\\
&
+\frac{1}{\varepsilon^2}\int_{\Omega_M}[(r-\rho)\partial_tH'(r)-\rho\mathbf u\cdot\nabla_xH'(r)]~dx
-\frac{1}{\varepsilon^2}\int_{\Omega_M}p(\rho)\text{div}_x\mathbf U~dx\nonumber\\
&\hspace{10pt}
+\varepsilon^\alpha\int^\tau_0\int_{\Omega_M}\sqrt{\rho}\mathbb{S}_\mu:\nabla_x\mathbf U~dxdt
+r_1(\varepsilon)\int^\tau_0\int_{\Omega_M}|\sqrt{\rho}\mathbf u|\sqrt{\rho}\mathbf u\cdot\mathbf U~dxdt\nonumber\\
=&\int_{\Omega_M}\sqrt{\rho}(\sqrt{\rho}\mathbf U-\sqrt{\rho}\mathbf u)\cdot(\partial_t\mathbf U+\mathbf u\cdot\nabla_x\mathbf U)~dxdt\nonumber\\
&+\frac{1}{\varepsilon^2}\int_{\Omega_M}[(r-\rho)\partial_tH'(r)+\nabla_xH'(r)(r\mathbf U-\rho\mathbf u)]~dx
-\frac{1}{\varepsilon^2}\int_{\Omega_M}(p(\rho)-p(r))\text{div}_x\mathbf U~dx\nonumber\\
&+\varepsilon^\alpha\int^\tau_0\int_{\Omega_M}\sqrt{\rho}\mathbb{S}_\mu:\nabla_x\mathbf U~dxdt
+r_1(\varepsilon)\int^\tau_0\int_{\Omega_M}|\sqrt{\rho}\mathbf u|\sqrt{\rho}\mathbf u\cdot\mathbf U~dxdt.
\end{align}

It is easy to calculate
\begin{align}\label{3.9}
(r-\rho)&\partial_tH'(r)+\nabla_xH'(r)(r\mathbf U-\rho\mathbf u)-(p(\rho)-p(r))\text{div}_x\mathbf U\nonumber\\
&=\big{(}p(r)-p'(r)(r-\rho)-p(\rho)\big{)}\text{div}_x\mathbf U
+(r-\rho)\partial_tH'(r)\nonumber\\
&\hspace{10pt}+(r-\rho)p'(r)\text{div}_x\mathbf U
+(r-\rho)\nabla_xH'(r)\cdot U
+\rho\nabla_xH'(r)(\mathbf U-\mathbf u)\nonumber\\
&=\big{(}p(r)-p'(r)(r-\rho)-p(\rho)\big{)}\text{div}_x\mathbf U
+(r-\rho)H''(r)(\partial_tr+\text{div}_x(r\mathbf U))
+\rho\nabla_xH'(r)(\mathbf U-\mathbf u).
\end{align}
Setting now $r=1+\varepsilon s$, $\mathbf U=\mathbf v+\nabla_x\Phi+w_M$ and using $\operatorname{div}_x\mathbf v=0$, we have
\begin{align}\label{3.10}
\partial_tr+\text{div}_x(r\mathbf U)&=\partial_tr+\text{div}_x[(r-\varepsilon s)\mathbf U]+\varepsilon\text{div}_x(s\mathbf U)\nonumber\\
&=\varepsilon\partial_ts+\text{div}_x\mathbf v+\text{div}_x(\nabla_x\Phi)+\text{div}_xw_M+\varepsilon\text{div}_x(s\mathbf U)\nonumber\\
&=\text{div}_xw_M+\varepsilon\text{div}_x(s\mathbf U).
\end{align}
On the other hand, it is straightforward that
\begin{align}\label{3.11}
\rho\nabla_xH'(r)(\mathbf U-\mathbf u)=\rho\nabla_x\big{(}H'(r)&-H''(1)(r-1)-H'(1)\big{)}\cdot(\mathbf U-\mathbf u)\\
&+\rho\nabla_x(H''(1)(r-1))\cdot(\mathbf U-\mathbf u),
\end{align}
while we obtain, recalling the acoustic system \eqref{2.10},
\begin{align}\label{3.12}
\rho\nabla_x(H''(1)(r-1))\cdot(\mathbf U-\mathbf u)&=\varepsilon\rho\nabla_x(H''(1)s)\cdot(\mathbf U-\mathbf u)\nonumber\\
&=\varepsilon\nabla_x(p'(1)s)\cdot(\mathbf U-\mathbf u)=-\varepsilon^2\rho\partial_t\nabla_x\Phi\cdot(\mathbf U-\mathbf u).
\end{align}
Plugging \eqref{3.9}--\eqref{3.12} into \eqref{3.8}, we get, as desired,
\begin{align*}
\mathcal{R}(\rho,\mathbf u,r,\mathbf U)
=&\int_{\Omega_M}\sqrt{\rho}(\sqrt{\rho}\mathbf U-\sqrt{\rho}\mathbf u)\cdot(\partial_t\mathbf U+\mathbf u\cdot\nabla_x\mathbf U)~dx\\
&+\frac{1}{\varepsilon^2}\int_{\Omega_M}\rho(\mathbf U-\mathbf u)\cdot\nabla_x(H'(r)-H''(1)(r-1)-H'(1))~dx\\
&+\frac{1}{\varepsilon^2}\int_{\Omega_M}\big{(}p(r)-p'(r)(r-\rho)-p(\rho))\text{div}_x\mathbf U~dx+\frac{1}{\varepsilon^2}\int_{\Omega_M}(r-\rho)H''(r)\text{div}_xw_M~dx\\
&
+\frac{1}{\varepsilon}\int_{\Omega_M}(r-\rho)H''(r)\text{div}_x(s\mathbf U)~dx
-\int_{\Omega_M}\rho\partial_t\nabla_x\Phi\cdot(\mathbf U-\mathbf u)~dx\\
&+\varepsilon^\alpha\int^\tau_0\int_{\Omega_M}\sqrt{\rho}\mathbb{S}_\mu:\nabla_x\mathbf U~dxdt
+r_1(\varepsilon)\int^\tau_0\int_{\Omega_M}|\sqrt{\rho}\mathbf u|\sqrt{\rho}\mathbf u\cdot\mathbf U~dxdt.
\end{align*}
\qed

\section{Proof of Theorem \ref{thm2.1}}
In this section, which constitutes the essential part of the proof of the main result, we will show that all the integrals in the remainder term $\mathcal{R}$ on the right-hand side of \eqref{3.3} can be absorbed into the left-hand side, which will enable us to conclude by Gr\"onwall's inequality. First, recalling $r=1+\varepsilon s$ and $\mathbf U=\mathbf v+\nabla_x\Phi+w_M$, we rewrite \eqref{3.3} as
\begin{align}\label{4.1}
\mathcal{R}(\rho,\mathbf u,r,\mathbf U)
=&\int_{\Omega_M}\sqrt{\rho}(\sqrt{\rho}\mathbf U-\sqrt{\rho}\mathbf u)\cdot(\partial_t\mathbf v+\partial_tw_M+\mathbf u\cdot\nabla_x\mathbf U)~dx\nonumber\\
&+\frac{1}{\varepsilon^2}\int_{\Omega_M}\rho(\mathbf U-\mathbf u)\cdot\nabla_x(H'(r)-H''(1)(r-1)-H'(1))~dx\nonumber\\
&+\frac{1}{\varepsilon^2}\int_{\Omega_M}\big{(}p(r)-p'(r)(r-\rho)-p(\rho))\text{div}_x\mathbf U~dx\nonumber\\
&+\frac{1}{\varepsilon^2}\int_{\Omega_M}(r-\rho)H''(r)\text{div}_xw_M
~dx+\frac{1}{\varepsilon}\int_{\Omega_M}(r-\rho)H''(r)\text{div}_x(s\mathbf U)~dx\nonumber\\
&+\varepsilon^\alpha\int^\tau_0\int_{\Omega_M}\sqrt{\rho}\mathbb{S}_\mu:\nabla_x\mathbf U~dxdt
+r_1(\varepsilon)\int^\tau_0\int_{\Omega_M}|\sqrt{\rho}\mathbf u|\sqrt{\rho}\mathbf u\cdot\mathbf U~dxdt\nonumber\\
=&\sum^{7}_{i=1}R_i.
\end{align}
First, we deal with term $R_1$ as:
\begin{align}\label{4.2}
&\int^\tau_0\int_{\Omega_M}\sqrt{\rho}(\sqrt{\rho}\mathbf U-\sqrt{\rho}\mathbf u)\cdot(\partial_t\mathbf v+\partial_tw_M+\mathbf u\cdot\nabla_x\mathbf U)~dxdt\nonumber\\
=&\int^\tau_0\int_{\Omega_M}\sqrt{\rho}(\sqrt{\rho}\mathbf U-\sqrt{\rho}\mathbf u)\cdot(\partial_t\mathbf v+\partial_tw_M+\mathbf U\cdot\nabla_x\mathbf U)~dxdt\nonumber\\
&\hspace{20pt}-\int^\tau_0\int_{\Omega_M}\rho(\mathbf U-\mathbf u)\otimes(\mathbf U-\mathbf u):\nabla_x\mathbf U~dxdt,
\end{align}
where
\begin{align}
\left|\int^\tau_0\int_{\Omega_M}\rho(\mathbf U-\mathbf u)\otimes(\mathbf U-\mathbf u):\nabla_x\mathbf U~dxdt\right|\leq2\int^\tau_0\left\|\nabla_x\mathbf v+\nabla_xw_M+\nabla^2_x\Phi\right\|_{L^\infty(\Omega_M)}\mathcal{E}dt.
\end{align}
We manipulate the first term on the right side of \eqref{4.2} as
\begin{align}\label{4.4}
&\int^\tau_0\int_{\Omega_M}\sqrt{\rho}(\sqrt{\rho}\mathbf U-\sqrt{\rho}\mathbf u)\cdot(\partial_t\mathbf v+\partial_tw_M+\mathbf U\cdot\nabla_x\mathbf U)~dxdt\nonumber\\
=&\int^\tau_0\int_{\Omega_M}\rho(\mathbf U-\mathbf u)\cdot(\partial_t\mathbf v+\partial_tw_M+\mathbf U\cdot\nabla_x\mathbf U)~dxdt\nonumber\\
=&\int^\tau_0\int_{\Omega_M}\rho(\mathbf U-\mathbf u)\cdot(\partial_t\mathbf v+\mathbf v\cdot\nabla_x\mathbf v)~dxdt
+\int^\tau_0\int_{\Omega_M}\rho(\mathbf U-\mathbf u)\partial_tw_M~dxdt\nonumber\\
&+\int^\tau_0\int_{\Omega_M}\rho(\mathbf U-\mathbf u)\otimes(\nabla_x\Phi+w_M):\nabla_x\mathbf v~dxdt
+\int^\tau_0\int_{\Omega_M}\rho(\mathbf U-\mathbf u)\frac{1}{2}\nabla_x|\nabla_x\Phi+w_M|^2~dxdt\nonumber\\
&+\int^\tau_0\int_{\Omega_M}\rho(\mathbf U-\mathbf u)\otimes\mathbf v:(\nabla_x^2\Phi+\nabla_xw_M)~dxdt\nonumber\\
=&\sum^5_{j=1}R_{1j}.
\end{align}
By virtue of \eqref{2.6}--\eqref{2.7} and \eqref{2.12}--\eqref{2.13}, we can control $R_{13}$ as 
{
\begin{align}\label{4.15}
|&\int^\tau_0\int_{\Omega_M}\rho(\mathbf U-\mathbf u)\otimes(\nabla_x\Phi+w_M):\nabla_x\mathbf v\nonumber~dxdt|\nonumber\\
\le&
C\int_0^\tau\int_{\Omega_M}
\rho|\mathbf U-\mathbf u|^2 dx dt
+\int_0^\tau\int_{\Omega_M}|[\rho]_{ess}+[\rho]_{res}||\nabla_x\Phi+w_M|^2|\nabla_x\mathbf v|^2dxdt\nonumber\\
\le &
C\int_0^\tau\int_{\Omega_M}
\rho|\mathbf U-\mathbf u|^2 dx dt+\int_0^\tau\int_{\Omega_M}\left|\varepsilon\left[\frac{\rho-1}{\varepsilon}\right]_{ess}+1+[\rho]_{res}\right|\nabla_x\Phi+w_M|^2|\nabla_x\mathbf v|^2dxdt\nonumber\\
\le &
C\int_0^\tau\int_{\Omega_M}
\rho|\mathbf U-\mathbf u|^2 dx dt\nonumber\\
&+\varepsilon\int_0^\tau\left\|\left[\frac{\rho-1}{\varepsilon}\right]_{ess}\right\|_{L^2(\Omega_M)}\|\nabla_x\Phi+w_M\|^2_{L^\infty(\Omega_M)}\|\nabla_x\mathbf v\|_{L^4(\Omega_M)}^2 dt\nonumber\\
&+\int_0^\tau\|\nabla_x\Phi
+w_M\|_{L^\infty(\Omega_M)}^2
\|\nabla_x\mathbf v\|_{L^2(\Omega_M)}^2dt\nonumber\\
&+\int_0^\tau  \|[\rho]_{res}\|_{L^{\gamma}(\Omega_M)}
\|\nabla_x\Phi+w_M\|^2_{L^\infty(\Omega_M)}
\|\nabla_x\mathbf v\|_{L^{\frac{2\gamma}{\gamma-1}}(\Omega_M)}^2dt.
\end{align}
By Sobolev's inequality, for any $p>3$ and some $C=C(p)$, we get the estimate
$$
\|\nabla_x\Phi+w_M\|_{L^\infty(\Omega_M)}
\leq C\left(\|\nabla_x\Phi\|_{W^{1,p}(\Omega_M)}+\|w_M\|_{W^{1,p}(\Omega_M)}\right)\le C\left[\left(1+\frac{t}{\varepsilon}\right)^{-\left(1-\frac{2}{p}\right)}
+M^{2\left(\frac{1}{p}-1\right)}\right].
$$
Thus we can control \eqref{4.15} as
\begin{align*}
&\int^\tau_0\int_{\Omega_M}\rho(\mathbf U-\mathbf u)\otimes(\nabla_x\Phi+w_M):\nabla_x\mathbf v\nonumber~dxdt\le
C\int_0^\tau\int_{\Omega_M}
\rho|\mathbf U-\mathbf u|^2 dx dt+C\left(\varepsilon^{2-\frac{4}{p}}+ M^{4\left(\frac{1}{p}-1\right)}
\right).
\end{align*}
}
In a similar way we control the terms $\int^\tau_0\int_{\Omega_M}\rho(\mathbf U-\mathbf u)\frac{1}{2}\nabla_x|\nabla_x\Phi+w_M|^2dx$ and $\int^\tau_0\int_{\Omega_M}\rho(\mathbf U-\mathbf u)\otimes\mathbf v:(\nabla_x^2\Phi+\nabla_xw_M)dx$. 

Next, {we turn to $R_{11}$: 
\begin{align}\label{4.16}
&\int^\tau_0\int_{\Omega_M}\rho(\mathbf U-\mathbf u)\cdot(\partial_t\mathbf v+\mathbf v\cdot\nabla_x\mathbf v)~dxdt \nonumber\\
=&-\int^\tau_0\int_{\Omega_M}\rho(\mathbf U-\mathbf u)\cdot\nabla_x\Pi~dxdt \nonumber\\
=&\int^\tau_0\int_{\Omega_M}\rho\mathbf u\cdot\nabla_x\Pi~dxdt -\int^\tau_0\int_{\Omega_M}\rho\mathbf U\cdot\nabla_x\Pi~dxdt.
\end{align}
Now we follow Feireisl et al. \cite{e6}. For the readers' convenience, we recall the calculation as
\begin{align}
\int^\tau_0\int_{\Omega_M}\rho\mathbf u\cdot\nabla_x\Pi~dxdt&=
-\int^\tau_0\int_{\Omega_M}\rho\partial_t\Pi~dxdt+\left[\int_{\Omega_M}\rho\Pi dx\right]^{t=\tau}_{t=0}\nonumber\\
&=-\varepsilon\int^\tau_0\int_{\Omega_M}\frac{\rho-1}{\varepsilon}\partial_t\Pi dx+\varepsilon\left[\int_{\Omega_M}\frac{\rho-1}{\varepsilon}\Pi dx\right]^{t=\tau}_{t=0}.
\end{align}
Using \eqref{2.6}--\eqref{2.7}, we see that these terms converge to zero. Now we turn to the second term on the right side of \eqref{4.16} as  
\begin{align}
\left|\int^\tau_0\int_{\Omega_M}\rho\mathbf U\cdot\nabla_x\Pi~dxdt\right|
&\leq\left|\varepsilon\int^\tau_0\int_{\Omega_M}\frac{\rho-1}{\varepsilon}\mathbf U\cdot\nabla_x\Pi~dxdt\right|
+\left|\int^\tau_0\int_{\Omega_M}\mathbf U\cdot\nabla_x\Pi~dxdt\right|\nonumber\\
&\leq C\varepsilon+\left|\int^\tau_0\int_{\Omega_M}\Pi\text{div}_x\mathbf U~dxdt\right|\nonumber\\
&\leq C\varepsilon+C\int^\tau_0\|\Delta_x\Phi\|_{L^\infty(\Omega_M)}+\|\text{div}_xw_M\|_{L^\infty(\Omega_M)}~dt\nonumber\\
&\leq C\varepsilon+\frac{1}{M^2}.
\end{align}  }
For the term $R_{12}$, we have
\begin{align}
&\int^\tau_0\int_{\Omega_M}\rho(\mathbf U-\mathbf u)\partial_tw_M~dxdt\nonumber\\
=&\int^\tau_0\int_{\Omega_M}[\rho]_{ess}(\mathbf U-\mathbf u)\partial_tw_M~dxdt
+\int^\tau_0\int_{\Omega_M}[\rho]_{res}(\mathbf U-\mathbf u)\partial_tw_M~dxdt\nonumber\\
\leq &C\int^\tau_0\int_{\Omega_M}\rho|\mathbf U-\mathbf u|^2~dxdt
+\int^\tau_0\|\partial_tw_M\|_{L^2(\Omega_M)}~dt\nonumber\\
&\hspace{10pt}+\int^\tau_0\|\sqrt{[\rho]_{res}}\|_{L^{2\gamma}(\Omega_M)}\|\partial_tw_M\|_{L^p(\Omega_M)}\|\sqrt{\rho}(\mathbf U-\mathbf u)\|_{L^2(\Omega_M)}~dt\nonumber\\
\leq &C\int^\tau_0\int_{\Omega_M}\rho|\mathbf U-\mathbf u|^2~dxdt
+C(\varepsilon^m+\frac{1}{M^{m}}),\hspace{3pt}(m=1-\frac{1}{p}>2).
\end{align}
Then we turn to the term $R_3$:
\begin{align}
&\left|\frac{1}{\varepsilon^2}\int^\tau_0\int_{\Omega_M}\big{(}p(r)-p'(r)(r-\rho)-p(\rho)\big{)}\text{div}_x\mathbf U~dxdt\right|\nonumber\\
=&\left|\frac{1}{\varepsilon^2}\int^\tau_0\int_{\Omega_M}\big{(}p(r)-p'(r)(r-\rho)-p(\rho)\big{)}\text{div}_x(\nabla_x\Phi+\text{div}_xw_M)~dxdt\right|\nonumber\\
\leq&\int^\tau_0(\|\Delta_x\Phi\|_{L^\infty(\Omega_M)}+\|\text{div}_xw_M\|_{L^\infty(\Omega_M)})\mathcal{E}~dt.
\end{align}
{
It is easy to compute that
\begin{align*}
\nabla(H'(r)-H''(1)(r-1)-H'(1))&=\gamma\nabla_x\left(\frac{(1+\varepsilon s)^{\gamma-1}-1}{\gamma-1}-\varepsilon s\right)=\varepsilon\gamma\left((1+\varepsilon s)^{\gamma-2}-1\right)\nabla_xs
\end{align*}
and then we have
\begin{align}
|\nabla_x(H'(r)-H''(1)(r-1)-H'(1))|\leq C\varepsilon^2|s\nabla_xs|.
\end{align}
Therefore, we can insert the above equality into $R_2$:
\begin{align}
&\left|\frac{1}{\varepsilon^2}\int^\tau_0\int_{\Omega_M}\rho(\mathbf U-\mathbf u)\nabla_x\big{(}H'(r)-H''(1)(r-1)-H'(1)\big{)}~dxdt\right|\nonumber\\
\leq&\int^\tau_0\int_{\Omega_M}|\rho(\mathbf U-\mathbf u)||s\nabla_xs|~dxdt\nonumber\\
\leq&\frac 1 2\int^\tau_0\int_{\Omega_M}
\rho\left|\mathbf U-\mathbf u\right|^2 dxdt
+\frac 1 2\int^\tau_0\int_{\Omega_M}\rho|s|^2|\nabla_x s|^2
dxdt\nonumber\\
\leq &\frac 1 2\int^\tau_0\int_{\Omega_M}
\rho\left|\mathbf U-\mathbf u\right|^2 dxdt
+\frac 1 2\int^\tau_0\int_{\Omega_M}
\left(1+
\varepsilon\left[\frac{\rho-1}{\varepsilon}\right]_{ess}
+[\rho]_{res}\right)|s|^2|\nabla_x s|^2dxdt\nonumber\\
\leq &\frac 1 2\int^\tau_0\int_{\Omega_M}
\rho\left|\mathbf U-\mathbf u\right|^2 dxdt
+\frac{1}{2}\sup_{0\le t\le \tau}\|s(\cdot,t)\|^2_{L^2(\Omega_M)}\int_0^\tau\|\nabla_x s\|^2_{L^\infty(\Omega_M)}dt\nonumber\\
&+\frac 1 2 \varepsilon\sup_{0\le t\le\tau}
\left\|\left[\frac{\rho-1}{\varepsilon}\right]_{ess}\right\|_{L^2(\Omega_M)}\int_0^\tau\|s\|^2_{L^4(\Omega_M)}
\|\nabla_x s\|^2_{L^\infty(\Omega_M)}dt\nonumber\\
&+\frac 1 2 \sup_{0\le t\le\tau}
\left\|[\rho]_{res}\right\|_{L^\gamma(\Omega_M)}
\int_0^\tau\|s\|^2_{L^{\frac{2\gamma}{\gamma-1}}
(\Omega_M)}\|\nabla_x s\|^2_{L^\infty(\Omega_M)}dt\nonumber\\
\leq &\frac 1 2\int^\tau_0\int_{\Omega_M}
\rho\left|\mathbf U-\mathbf u\right|^2 dxdt
+\frac{C}{2}\sup_{0\le t\le \tau}\|s(\cdot,t)\|^2_{L^2(\Omega_M)}\int_0^\tau\left(
1+\frac t \varepsilon\right)^{-2}dt\nonumber\\
&+\frac 1 2 \varepsilon\sup_{0\le t\le\tau}
\left\|\left[\frac{\rho-1}{\varepsilon}\right]_{ess}\right\|_{L^2(\Omega_M)}\int_0^\tau
\left(1+\frac t \varepsilon\right)^{-1}
\left(1+\frac t \varepsilon\right)^{-2}
dt\nonumber\\
&+\frac 1 2 \sup_{0\le t\le\tau}
\left\|[\rho]_{res}\right\|_{L^\gamma(\Omega_M)}
\int_0^\tau\left(1+\frac t \varepsilon\right)^{-\frac{2}{\gamma}}
\left(1+\frac t \varepsilon\right)^{-2}dt\nonumber\\
\leq &\frac 1 2\int^\tau_0\int_{\Omega_M}
\rho\left|\mathbf U-\mathbf u\right|^2 dxdt
+C\varepsilon,
\end{align}
where we used the fact that for $\beta>1$, 
\[
\int_0^\tau \left(1+\frac{t}{\varepsilon}\right)^{-\beta}dt=\varepsilon\int_0^{\frac{\tau}{\varepsilon}} \left(1+u\right)^{-\beta}du\le\frac{\varepsilon}{\beta-1}.
\]
}
Next, we use \eqref{2.13} to estimate $R_4$ as 
{\begin{align}
&\left|\frac{1}{\varepsilon^2}\int^\tau_0\int_{\Omega_M}(r-\rho)H''(r)\text{div}_xw_M~dxdt\right|\nonumber\\
=&\left|\frac{1}{\varepsilon}\int^\tau_0\int_{\Omega_M}\frac{1-\rho}{\varepsilon}H''(r)\text{div}_xw_M~dxdt
+\frac{1}{\varepsilon}\int^\tau_0\int_{\Omega_M}sH''(r)\text{div}_xw_M~dxdt\right|\nonumber\\
\leq &\frac{1}{\varepsilon}\left|\int^\tau_0\left(-\left[\frac{\rho-1}{\varepsilon}\right]_{ess}
+\frac{1}{\varepsilon}\left[1\right]_{res}+\frac{1}{\varepsilon}\left[\rho\right]_{res}\right)\text{div}_x w_M~dt
\right|+\frac{1}{\varepsilon}\int^\tau_0\|\text{div}_xw_M\|_{L^{2}(\Omega_M)}\|s\|_{L^2(\Omega_M)}~dt\nonumber\\
\leq&\frac{1}{\varepsilon}\int_0^\tau\left\|\left[\frac{\rho-1}{\varepsilon}\right]_{ess}\right\|_{L^2(\Omega_M)}\|\text{div}_xw_M\|_{L^{2}(\Omega_M)}dt
+\frac{1}{\varepsilon^2}\int_0^\tau\left\|\left[1\right]_{res}\right\|_{L^1(\Omega_M)}\|\text{div}_xw_M\|_{L^{\infty}(\Omega_M)}dt\nonumber\\
&+\frac{1}{\varepsilon^2}\int_0^\tau\left\|\left[\rho^\gamma\right]_{res}\right\|_{L^1(\Omega_M)}
\|\text{div}_xw_M\|_{L^{\frac{\gamma}{\gamma-1}}(\Omega_M)}dt+\frac{1}{\varepsilon}\int^\tau_0\|\text{div}_xw_M\|_{L^{2}(\Omega_M)}\|s\|_{L^2(\Omega_M)}~dt\nonumber\\
\leq&C\left(\frac{1}{\varepsilon M}+\frac{1}{M^2}+\frac{1}{M^\frac{2}{\gamma}}\right).
\end{align}}
Similarly, we control $R_5$ as
{\begin{align}\label{4.14}
&\left|\frac{1}{\varepsilon}\int^\tau_0\int_{\Omega_M}(r-\rho)H''(r)\text{div}_x(s\mathbf U)~dxdt\right|\nonumber\\
=&\left|\int^\tau_0\int_{\Omega_M}\left(\frac{1-\rho}{\varepsilon}+s\right)H''(r)[\nabla_xs\cdot(\mathbf v+\nabla_x\Phi+w_M)+s\Delta_x\Phi+s\text{div}_xw_M]~dxdt\right|\nonumber\\
\leq & C\int^\tau_0\int_{\Omega_M}\left(|\frac{1-\rho}{\varepsilon}|+|s|\right)(|s|+|\nabla_xs|)
\big{(}|\mathbf v|+|\nabla_x\Phi|+|w_M|+|\Delta_x\Phi|+|\text{div}_xw_M|\big{)}~dxdt\nonumber\\
\leq&\eta(\epsilon, M),
\end{align} 
where $\eta(\epsilon, M)$ tends to zero by similar estimates as above;  for instance,
\begin{equation}
\int^\tau_0\int_{\Omega_M}|s||\nabla_x s||\text{div}_xw_M|dxdt
\leq \int^\tau_0\|s\|_{L^2(\Omega_M)}\|\nabla_x s\|_{L^2(\Omega_M)}
\|\text{div}_xw_M\|_{L^\infty(\Omega_M)}dt
\leq \frac{C}{M^2}.
\end{equation}
}

Now, we consider the drag term $R_7$. Recalling $r_1(\varepsilon)\rightarrow0$ as $\varepsilon\rightarrow0$ and \eqref{2.5}, the term $R_7$ is controlled as 
\begin{align}
\left|r_1(\varepsilon)\int^\tau_0\int_{\Omega_M}|\sqrt{\rho}\mathbf u|\sqrt{\rho}\mathbf u\cdot U~dxdt\right|
\leq \eta(\epsilon)\rightarrow0.
\end{align}
Finally, we turn to the most difficult term, the viscous one, using $\mathbf U=\mathbf v+\nabla_x\Phi+w_M$:

\begin{align}
\varepsilon^\alpha\int^\tau_0\int_{\Omega_M}\sqrt{\rho}\mathbb{S}_\mu:\nabla_x\mathbf U~dxdt
&=\varepsilon^\alpha\int^\tau_0\int_{\Omega_M}\left(\sqrt{\rho}\mathbb{S}_\mu:\nabla_x\mathbf v+\sqrt{\rho}\mathbb{S}_\mu:\nabla_x^2\Phi+
\sqrt{\rho}\mathbb{S}_\mu:\nabla_xw_M\right)~dxdt\nonumber\\
&=\sum^{3}_{i=1}R_{6i}.
\end{align}
Using the Cauchy-Schwarz inequality, we have
\begin{align*}
R_{61}\leq&\varepsilon^\alpha\int^\tau_0\left(\int_{\Omega_M}\rho|\nabla_x\mathbf v|^2~dx\right)^{\frac{1}{2}}
\left(\int_{\Omega_M}|\mathbb{S}_\mu|^2~dx\right)^{\frac{1}{2}}~dt\\
\leq&\varepsilon^\alpha\int^\tau_0\int_{\Omega_M}\rho|\nabla_x\mathbf v|^2~dxdt
+\frac{1}{8}\varepsilon^\alpha\int^\tau_0\int_{\Omega_M}|\mathbb{S}_\mu|^2~dxdt\\
=&R_{611}+R_{612},
\end{align*}
where $R_{612}$ is under control. We divide $R_{611}$ into two parts:
\begin{align}
R_{611}=&\varepsilon^\alpha\int^\tau_0\int_{\Omega_M}[\rho]_{ess}|\nabla_x\mathbf v|^2~dxdt+\varepsilon^\alpha\int^\tau_0\int_{\Omega_M}[\rho]_{res}|\nabla_x\mathbf v|^2~dxdt\nonumber\\
\leq&\varepsilon^{\alpha+1}\int^\tau_0\int_{\Omega_M}\left[\frac{\rho-1}{\varepsilon}\right]_{ess}|\nabla_x\mathbf v|^2~dxdt
+\varepsilon^{\alpha}\int^\tau_0\int_{\Omega_M}|\nabla_x\mathbf v|^2~dxdt\nonumber\\
&+\varepsilon^{\alpha}\int^\tau_0\|[\rho]_{res}\|_{L^\gamma(\Omega_M)}\left(\int_{\Omega_M}|\nabla_x\mathbf v|^{\frac{2\gamma}{\gamma-1}}~dx\right)^{\frac{\gamma}{\gamma-1}}~dt\nonumber\\
\leq&\varepsilon^{\alpha+1}\int^\tau_0\left\|\left[\frac{\rho-1}{\varepsilon}\right]_{ess}\right\|_{L^2(\Omega_M)}\|\nabla_x\mathbf v\|^2_{L^4(\Omega_M)}~dt
+\varepsilon^{\alpha}\int^\tau_0\int_{\Omega_M}|\nabla_x\mathbf v|^2~dxdt\nonumber\\
&+\varepsilon^{\alpha+\frac2\gamma}\int^\tau_0\left(\int_{\Omega_M}|\nabla_x\mathbf v|^{\frac{2\gamma}{\gamma-1}}~dx\right)^{\frac{\gamma}{\gamma-1}}~dt\nonumber\\
&\leq \varepsilon^{\alpha+1}+\varepsilon^\alpha+\varepsilon^{\alpha+\frac2\gamma}.
\end{align}
Similarly, one estimates $R_{62}$ as
\begin{align*}
R_{62}\leq&
\varepsilon^\alpha\int^\tau_0\left(\int_{\Omega_M}\rho|\nabla_x^2\Phi|^2~dx\right)^{\frac{1}{2}}\left(\int_{\Omega_M}|\mathbb{S}_\mu|^2~dx\right)^{\frac{1}{2}}~dt\nonumber\\
\leq&\varepsilon^\alpha\int^\tau_0\int_{\Omega_M}\rho|\nabla_x^2\Phi|^2~dxdt+\frac{1}{8}\varepsilon^\alpha\int^\tau_0\int_{\Omega_M}|\mathbb{S}_\mu|^2~dxdt\\
=&R_{621}+\frac{\varepsilon^\alpha}{8}\int^\tau_0\|\mathbb{S}_\mu\|^2_{L^2(\Omega_M)}~dt.
\end{align*}
A similar analysis gives rise to the following:
\begin{align}
R_{621}=&\varepsilon^\alpha\int^\tau_0\int_{\Omega_M}[\rho]_{ess}|\nabla_x^2\Phi|^2~dxdt+\varepsilon^\alpha\int^\tau_0\int_{\Omega_M}[\rho]_{res}|\nabla_x^2\Phi|^2~dxdt\nonumber\\
\leq&\varepsilon^{\alpha+1}\int^\tau_0\int_{\Omega_M}[\frac{\rho-1}{\varepsilon}]_{ess}|\nabla_x^2\Phi|^2~dxdt
+\varepsilon^{\alpha}\int^\tau_0\int_{\Omega_M}|\nabla_x^2\Phi|^2~dxdt\nonumber\\
&+\varepsilon^{\alpha}\int^\tau_0\|[\rho]_{res}\|_{L^\gamma(\Omega_M)}\|\nabla_x^2\Phi\|^2_{L^{\frac{2\gamma}{\gamma-1}}(\Omega_M)}~dt\nonumber\\
\leq&\varepsilon^{\alpha+1}\int^\tau_0\left\|\left[\frac{\rho-1}{\varepsilon}\right]_{ess}\right\|_{L^2(\Omega_M)}\|\nabla_x^2\Phi\|^2_{L^4(\Omega_M)}~dt
+\varepsilon^{\alpha}\int^\tau_0\|\nabla_x^2\Phi\|^2_{L^2(\Omega_M)}~dxdt\nonumber\\
&+\varepsilon^{\alpha+2}\int^\tau_0\|\nabla_x^2\Phi\|^2_{L^{\frac{2\gamma}{\gamma-1}}(\Omega_M)}~dt.
\end{align}
Using the Strichartz estimate \eqref{2.12} for $\nabla_x\Phi$, we have
\begin{align*}
\|\nabla_x^2\Phi\|_{L^{2}(\Omega_M)},\ \|\nabla_x^2\Phi\|_{L^{4}(\Omega_M)},\ \|\nabla_x^2\Phi\|_{L^{\frac{2\gamma}{\gamma-1}}(\Omega_M)}\leq C\left(1+\frac{t}{\varepsilon}\right)^{-1},
\end{align*}
then we deduce for some $m>0$
\begin{align}
R_{621}\leq\varepsilon^m\int^\tau_0\left(1+\frac{t}{\varepsilon}\right)^{-1}dt\leq\varepsilon^{m+1}\ln(1+\varepsilon^{-1}\tau)\rightarrow0, \hspace{3pt}\text{as}\hspace{3pt}\varepsilon\rightarrow0.
\end{align}

Once more, we apply the Cauchy-Schwarz inequality to get
\begin{align}
R_{63}\leq&\varepsilon^\alpha\int^\tau_0\left(\int_{\Omega_M}\rho|\nabla_xw_M|^2~dx\right)^{\frac{1}{2}}\left(\int_{\Omega_M}|\mathbb{S}_\mu|^2~dx\right)^{\frac{1}{2}}~dt\nonumber\\
\leq&\varepsilon^\alpha\int^\tau_0\int_{\Omega_M}\rho|\nabla_xw_M|^2~dxdt+\frac{1}{8}\varepsilon^\alpha\int^\tau_0\int_{\Omega_M}|\mathbb{S}_\mu|^2~dxdt\nonumber\\
=&R_{631}+R_{632},
\end{align}
then 
\begin{align}\label{4.21}
R_{631}=&\varepsilon^\alpha\int^\tau_0\int_{\Omega_M}[\rho]_{ess}|\nabla_xw_M|^2~dxdt+\varepsilon^\alpha\int^\tau_0\int_{\Omega_M}[\rho]_{res}|\nabla_xw_M|^2~dxdt\nonumber\\
\leq&\varepsilon^{\alpha+1}\int^\tau_0\int_{\Omega_M}\left[\frac{\rho-1}{\varepsilon}\right]_{ess}|\nabla_xw_M|^2~dxdt
+\varepsilon^{\alpha}\int^\tau_0\int_{\Omega_M}|\nabla_xw_M|^2~dxdt\nonumber\\
&+\varepsilon^{\alpha}\int^\tau_0\|[\rho]_{res}\|_{L^\gamma(\Omega_M)}\left(\int_{\Omega_M}|\nabla_xw_M|^{\frac{2\gamma}{\gamma-1}}~dx\right)^{\frac{\gamma}{\gamma-1}}~dt\nonumber\\
\leq&\varepsilon^{\alpha+1}\int^\tau_0\left\|\left[\frac{\rho-1}{\varepsilon}\right]_{ess}\right\|_{L^2(\Omega_M)}\|\nabla_xw_M\|^2_{L^4(\Omega_M)}~dt
+\varepsilon^{\alpha}\int^\tau_0\int_{\Omega_M}|\nabla_xw_M|^2~dxdt\nonumber\\
&+\varepsilon^{\alpha+2}\int^\tau_0\left(\int_{\Omega_M}|\nabla_xw_M|^{\frac{2\gamma}{\gamma-1}}~dx\right)^{\frac{\gamma}{\gamma-1}}~dt\nonumber\\
\leq& \eta_2(\epsilon,M),
\end{align}
where  $\eta_2(\varepsilon, M)\rightarrow0$ by virtue of similar estimates.

Putting \eqref{4.2}-\eqref{4.21} together, we conclude that
\begin{align*}
\mathcal{E}(\tau)-\mathcal{E}(0)+\varepsilon^\alpha\int^\tau_0\int_{\Omega_M}|\mathcal{S}_\mu|^2~dxdt\leq
\int^\tau_0(C+\eta_1(\varepsilon))\mathcal{E}~dt+\eta_2(\varepsilon, M),
\end{align*}
where $\eta_1(\varepsilon)$, $\eta_2(\varepsilon, M)\rightarrow0$, as $\varepsilon\rightarrow0$, $M\rightarrow+\infty$. Thus we have completed the proof of Theorem \ref{thm2.1}.
\qed
\vskip 0.5cm




\vskip 0.5cm
\noindent {\bf Acknowledgements}

\vskip 0.1cm
The authors thank Eduard Feireisl for helpful discussions and constructive suggestions. T.~Tang is partially supported by NSFC No.~12371246 and Qing Lan Project of Jiangsu Province. E.~Wiedemann is supported by the DFG Priority Programme SPP 2410 (project number 525716336). T.~Tang wishes to thank the University of Erlangen-N\"{u}rnberg for their hospitality during his stay in Germany, while E.~Wiedemann gratefully acknowledges the support of Yangzhou University during his visit to China.


\end{document}